\theoremstyle{thmstyleone}%
\newtheorem{theorem}{Theorem}
\newtheorem{theoremletter}{Theorem}
\newtheorem{lemmaletter}[theoremletter]{Lemma}
\theoremstyle{thmstyletwo}%
\newtheorem{remark}{Remark}%
\theoremstyle{thmstylethree}%
\begin{document}
\newcommand{\R}{\mathds{R}}
\newcommand{\So}{\mathcal{S}}
\newcommand{\M}{M^{+}(\R^n)}
\newcommand{\wpp}{\mathbf{H}_{0}^{1}(\Omega)}
\newcommand{\woo}{\mathbf{W}_{0}^{1,1}(\Omega)}
\newcommand{\wa}{\mathbf{W}_{\alpha,p}}
\newcommand{\wao}{\mathbf{W}_{\alpha,p}}
\newcommand{\wat}{\mathbf{W}_{\alpha,p}}
\newcommand{\wak}{\mathbf{W}_{k}}
\newcommand{\wah}{\mathbf{W}_{\frac{2k}{k+1},k+1}}
\newcommand{\ia}{\mathbf{I}_{2\alpha}}
\newcommand{\Om}{\Omega}
\newcommand{\e}{\epsilon}
\newcommand{\g}{\gamma}
\newcommand{\al}{\alpha}
\newcommand{\te}{\theta}
\newcommand{\un}{u_{n}}
\newcommand{\vf}{\varphi}
\newcommand{\wc}{\rightharpoonup}
\newcommand{\phik}{\Phi^{k}(\Om)}
\newcommand{\la}{\lambda}
\newcommand{\dx}{\mathrm{dx}}
\newcommand{\Div}{\mathrm{div}}
\newcommand{\Linf}{L^{\infty}(\Om)}
\newcommand{\iom}{\int_{\Omega}}
\newcommand{\ioa}{\int_{A_{k}}}
\def\norma#1#2{\|#1\|_{\lower 4pt \hbox{$\scriptstyle #2$}}}
\def\L#1{L^{#1}(\Omega)}

\title{An elliptic equation with power nonlinearity and degenerate coercivity}
\author{\fnm{Genival} \sur{da Silva}\footnote{email: gdasilva@tamusa.edu, website: \url{www.gdasilvajr.com}}}
\affil{\orgdiv{Department of Mathematics}, \orgname{Texas A\&M University - San Antonio}}


\abstract{
We discuss the existence and regularity of solutions to the following Dirichlet problem: 
\begin{equation} 
\begin{cases}
-\Div\left(\frac{Du}{(1+|u|)^{\te}}\right)= -\Div\left(u^{\g}E(x)\right)+f(x) \qquad & \mbox{in } \Omega,\\
u (x) = 0 & \mbox{on }  \partial \Omega,
\end{cases}
\end{equation}
where $\te,\g>0$. An interesting feature of this problem is the interplay between the two nonlinearities, the degeneracy and the power nonlinearity.
}

\keywords{quasilinear, elliptic equation, regularity, existence, degenerate}


\pacs[MSC Classification]{35J70,35J15,35B65,35A01}

\maketitle
\section{Introduction}
In these notes we study existence and regularity of solutions to a class of elliptic problems whose basic model is
\begin{equation} 
\begin{cases}
 -\Div\left(\frac{Du}{(1+|u|)^{\te}}\right)= -\Div\left(u^{\g}E(x)\right)+f(x) \qquad & \mbox{in } \Omega,\\
u (x) = 0 & \mbox{on }  \partial \Omega,
\end{cases}
\end{equation}
where $\te>0$, $E(x)$ is a vector field and $f(x)$ a function in $\L m$ with $m\ge 1$.

More generally, we will focus on the following problem:
\begin{equation} \label{main}\tag{D}
\begin{cases}
 -\Div(a(x,u)Du)= -\Div\left(u^{\g}E(x)\right)+f(x) \qquad & \mbox{in } \Omega,\\
u (x) = 0 & \mbox{on }  \partial \Omega,
\end{cases}
\end{equation}
where $\g$ is a fixed number, $a(x,s)$ is a Caratheodory function which satisfies, for a.e. $x\in\Om$, any $s\in\R$:
\begin{equation}\label{degc}\tag{C}
\frac{\al}{(1+|s|)^{\te}}\le a(x,s)\le \beta,
\end{equation}
where $\al,\beta$ are positive constants. 

The summability of $E(x)$ and $f(x)$ will vary and will be specified below.
\vspace{0.1in}
\begin{remark}
Condition \eqref{degc} implies that the operator in problem \eqref{main} is not coercive, hence the usual $\wpp$-existence theory cannot be applied directly. Moreover, since $E(x)$ is not necessarily a potential, i.e. $E=D v$, the equation is not always variational.
\end{remark}
\vspace{0.1in}
Most of our results will assume $\te+\g$ small, the case where $\g$ is large and $\te=0$ has been recently described in \cite{boc24}. If $\te>1$, some nonexistence results exist, see \cite{boc03}. The rationale behind all these results is that when the summability of $E(x)$ and $f(x)$ is high enough, bounded weak solutions tend to exist, whereas in low summability cases we can only get distributional solution lying in some Sobolev space $\textbf{W}_{0}^{1,q}(\Om)$, for some $1<q<2$, and in some cases, a smallness condition on the source $f(x)$ is also required.

We will look for two types of solutions:

A \textbf{weak solution}, sometimes also called a \textit{finite energy} solution, is a function $u\in\wpp$ such that for $f\in \L{2_{*}}$, $u^{\g} E\in [\L 2]^{n}$ and we have:
\begin{equation}\label{weak}
\int_{\Omega} a(x,u)Du  D \varphi =\int_{\Omega}u^{\g}E(x)  D \varphi  + \int_{\Omega} f \varphi \quad \forall \varphi \in  \wpp.
\end{equation}
\vspace{0.1in}
A \textbf{distributional solution} is a function $u\in\woo$ such that $f\in \L 1$, $u^{\g} E\in [L^{1}_{loc}(\Om)]^{n}$ and we have:
\begin{equation}\label{dis}
\int_{\Omega} a(x,u)Du  D \varphi =\int_{\Omega}u^{\g}E(x)  D \varphi  + \int_{\Omega} f \varphi \quad \forall \varphi \in  \mathcal{C}^{\infty}_{0}(\Om).
\end{equation}
\vspace{0.1in}
\begin{remark}
Notice that every weak solution $u\in\wpp$, if there is one, is a distributional solution by definition.
\end{remark}
\vspace{0.1in}
As mentioned above, nonexistence can occur when $\te>1$. As a counterweight, in the last section of this paper we add a lower order term to problem \eqref{main} and are able to recover existence and regularity of solutions in this scenario as well.

Existence and regularity of solutions to quasilinear elliptic equations is an old and interesting problem. The literature is vast, specially for semilinear equations, see for example \cite{ding86,brezis92,diaz93,djairo95,blions,ambrosetti,boc09,boc24}, and for more comprehensive treatment see \cite{trudinger, boc13}.

\subsection{Summary of results}
For the convenience of the reader we present a brief summary of what will be proved in this paper. We start with problem \eqref{main} and assume $0<\te,\g<1$, $f\in \L{m}$, $E\in [\L{2r}]^{n}$. The following will be proved:
\begin{itemize}
	\item If $\te+\g<\frac{2^{*}}{2}$ and $m,r$ are sufficiently large then there is a bounded weak solution $u\in\wpp\cap\L\infty$.
	\item If $\te+2\g<1$, we drop the high summability of $E(x)$, we get a distributional solution $u\in\textbf{W}_{0}^{1,q}$, where $1<q<2$.
	\item if $\te+\g<\frac 1 2$ and we consider $f\in\L1$ only, we still have a distributional solution $u\in\textbf{W}_{0}^{1,q}$, with $1<q<2$.
\end{itemize}
In the last section, we study the problem \eqref{main2} with $\te>0$ but still maintaining $\g$ small. We prove:
\begin{itemize}
	\item If $\te+2\g<2$, $m$ and $r$ sufficiently large then there is a bounded weak solution $u\in\wpp\cap\L\infty$.
	\item If $\te+2\g<2$, $m\ge \te+2$ and $r$ sufficiently large then there is a distributional solution $u\in\wpp\cap\L m$.
	\item If $\te+2\g<2$, $2 \le m< \te+2$ and $r$ sufficiently large then there is has distributional solution $u\in\wpp\cap\L m$.
\end{itemize}
\subsection{Notation \& Assumptions}
\begin{itemize}
\item[-] $\Om\subset \R^{N}$ is a bounded domain and $N\ge 3$.
\item[-] The space $\wpp$ denotes the usual Sobolev space which is the closure of $\mathcal{C}^{\infty}_{0}(\Om)$, smooth functions with compact support using the Sobolev norm. 
\item[-] For $q>1$, $q'$ denotes the Holder conjugate, i.e. $\frac{1}{q}+\frac{1}{q'}=1$, and $q^{*}$ denotes the Sobolev conjugate, defined by $q^{*}=\frac{qN}{N-q}>q$. 
\item[-] For $p>1$, $p_{*}$ denotes $(p^{*})'$, in particular, $2_{*}=\frac{2N}{N+2}$.
\item[-] We will use the somewhat standard notation for Stampacchia's truncation functions (see \cite{boc13}):
\[
T_k(s)=\max\{-k,\min\{s,k\}\}, \ \ \ G_k(s)=s-T_k(s), \quad \mbox{ for } k>0.
\]
\item[-] The letter $C$ will always denote a positive constant which may vary from place to place.
\item[-] The Lebesgue measure of a set $A\subseteq \R^{n}$ is denoted by $|A|$.
\item[-] The symbol $\rightharpoonup$ denotes weak convergence. 
\end{itemize}
\section{Proof of the results}
Fix $n>0$, let $f_{n}(x)=T_n(f(x))$ and $E_{n}(x)=T_n(E(x))$, the latter is the vector field  obtained from $E(x)$ by truncating its components by $n$. Consider the truncated equation:
\[
 -\Div(a(x,T_{n}(u_{n}))D\un)= -\Div\left(T_{n}(\un^{\g})E_{n}\right)+f_{n}(x)
\]
A simple application of Schauder’s fixed point theorem guarantee the existence of weak solution, i.e. a function $u_{n}\in\wpp$ satisfying
\begin{equation}\label{wkf}
\int_{\Omega} a(x,T_{n}(u_{n}))Du_{n} D \varphi =\int_{\Omega}\left(T_{n}(\un^{\g})E_{n}\right) D \varphi  + \int_{\Omega} f_{n} \varphi \quad \forall \varphi \in  \wpp.
\end{equation}
Moreover, since the right hand side is bounded, classical regularity results imply that $\un\in\L\infty$ as well.

The following lemma will be needed below.
\vspace{0.1in}
\begin{lemmaletter}\label{lemA}\cite[Lem~6.2]{boc13}
Let $f\in L^{m}$, $m>\frac N 2$, $g(k)=\int_{\Om} |G_{k}(f)|$ and $A_{k}=\{ |f|>k \}$. Suppose
\[
g(k)\le \beta |A_{k}|^{\al}
\]
for some $\al>1$ and $\beta>0$. Then $f\in \L{\infty}$ and 
\[
\norma{f}{\infty}\le C\beta 
\]
for some $C=C(\al,\Om)$. 
\end{lemmaletter}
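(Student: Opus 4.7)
My plan is to derive the $L^{\infty}$ bound on $f$ from the super-level-set decay hypothesis via a standard Stampacchia-type iteration on a geometrically chosen sequence of truncation levels. The $L^{m}$ assumption with $m>N/2$ is only used to guarantee that $g(k)$ and $|A_k|$ are finite; the argument itself rests solely on the inequality $g(k)\le \beta |A_k|^{\al}$ together with $\al>1$.

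The first step is to convert the hypothesis into a recursive inequality between super-level sets. For any $h>k>0$, on $A_h$ one has $|G_k(f)|=|f|-k>h-k$, so
\[
g(k)=\iom |G_k(f)|\ge \int_{A_h}|G_k(f)|\ge (h-k)\,|A_h|.
\]
Combined with the hypothesis this yields the key recursion
\[
|A_h|\le \frac{\beta}{h-k}\,|A_k|^{\al},\qquad h>k>0.
\]

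The heart of the argument is the iteration of this recursion along a geometric sequence of levels. I would fix a threshold $d>0$ to be chosen, set $k_n=d(1-2^{-n})$ so that $k_{n+1}-k_n=d\,2^{-n-1}$ and $k_n\nearrow d$, and then establish by induction the geometric decay $|A_{k_n}|\le |A_0|\,2^{-n/(\al-1)}$. A direct calculation shows that this induction closes provided one chooses $d\ge 2^{\al/(\al-1)}\,\beta\,|A_0|^{\al-1}$. Since $\Om$ is bounded, $|A_0|\le |\Om|$, and therefore it suffices to take $d=C(\al,\Om)\,\beta$.

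Passing to the limit, the geometric decay gives $|A_d|=\lim_n|A_{k_n}|=0$, which is equivalent to $|f|\le d$ almost everywhere, and hence $\norma{f}{\infty}\le d\le C\beta$. The only step that is not purely formal is closing the induction: one must verify that the chosen $d$ indeed makes the exponents of $2$ in the recursion balance out. This is the standard calculation underlying Stampacchia's iteration lemma, and it is the principal (though mild) obstacle in the argument.
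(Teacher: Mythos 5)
Your argument is correct. The paper itself does not prove Lemma~\ref{lemA}; it is quoted from \cite[Lem.~6.2]{boc13}, so there is no in-text proof to match, but your route is one of the two standard ones. From $g(k)\ge (h-k)|A_h|$ for $h>k$ you obtain the recursion $|A_h|\le \beta (h-k)^{-1}|A_k|^{\al}$ and close Stampacchia's discrete iteration along $k_n=d(1-2^{-n})$; the exponent bookkeeping with $\mu=1/(\al-1)$ does balance exactly when $d\ge 2^{\al/(\al-1)}\beta|A_0|^{\al-1}$, and $|A_0|\le|\Om|$ turns this into $d\le C(\al,\Om)\beta$, as you claim. The proof in the cited source instead works with the continuous version: since $g(k)=\int_{A_k}(|f|-k)$ is absolutely continuous with $g'(k)=-|A_k|$ for a.e.\ $k$, the hypothesis becomes the differential inequality $g(k)\le \beta(-g'(k))^{\al}$, which forces $g^{1-1/\al}$ to decrease at a fixed linear rate and hence to vanish at some finite level $k^{*}\le C(\al,\Om)\beta$. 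The two arguments are interchangeable here; the ODE version is slightly shorter, while your discrete iteration avoids any differentiability discussion. Two cosmetic points you may want to tidy: you correctly observe that the hypothesis $m>N/2$ plays no role in this abstract statement (integrability of $f$ on the bounded set $\Om$ already makes $g(k)$ and $|A_k|$ finite); and your recursion is stated for $h>k>0$ while the iteration starts at $k_0=0$, which is harmless but should either be started at a small positive level or justified by noting the hypothesis extends to $k=0$.
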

\subsection{When $m,r$ are sufficiently large}
In our first result below, we seek \textit{finite energy solutions}, that is, bounded weak solutions $u\in\wpp$. As mentioned in the introduction, the majority of results of this type require high summability on the source. The theorem below confirms that claim and also requires an additional summability of the vector field $E(x)$ as well. 
\vspace{0.1in}
\begin{theorem} \label{bounds}
Suppose \eqref{degc} holds with $0<\te<2^{*}-2$ and $0<\g<1$ satisfying
\[
\te+\g<\frac{2^{*}}{2},
\]
$E\in [\L {2r}]^{N}$, $f\in\L m$ satisfying
\begin{equation}\label{eq11}
\begin{split}
r&>\frac{(\g+2)}{(\g+1)}\frac{N}{2},\\ m&>\frac{N}{2},\\ 
\end{split}
\end{equation}
Then the Dirichlet problem \eqref{main} has a weak bounded solution $u\in\wpp\cap\L \infty$.
\end{theorem}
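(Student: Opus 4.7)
The strategy is to obtain uniform a priori estimates on the approximating solutions $\un\in\wpp\cap\Linf$ of the truncated equation \eqref{wkf} and then pass to the limit. First I would establish a uniform $L^\infty$ bound on $\un$, then a uniform $\wpp$ bound (which becomes automatic once the operator is uniformly coercive), and finally use weak compactness together with almost-everywhere convergence of gradients to extract a weak solution of \eqref{weak}.

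The critical estimate is the $L^\infty$ bound, which I would prove by adapting Stampacchia's iteration to the degenerate coercivity \eqref{degc}. The obstacle is that testing with the usual $G_k(\un)$ only yields a coercivity lower bound of the form $\al(1+\|\un\|_{\Linf})^{-\te}\|DG_k(\un)\|_{\L 2}^2$, which is useless before we know $\un$ is bounded. The remedy is to use a test function that cancels the factor $(1+|s|)^\te$ in \eqref{degc}: I would take $\vf=\phi(\un)-\phi(T_k(\un))$ with $\phi(t)=\int_0^t(1+|s|)^\te\,ds$, so that $D\vf=(1+|\un|)^\te D\un\cdot\mathbf{1}_{A_k}$ on $A_k=\{|\un|>k\}$ and $D\vf=0$ elsewhere. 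Since $(1+|T_n(\un)|)\le(1+|\un|)$, the left-hand side of \eqref{wkf} is then bounded below by $\al\|DG_k(\un)\|_{\L 2}^2$. On the right-hand side, the pointwise inequality $1+|\un|\le(1+k)(1+|G_k(\un)|)$ valid on $A_k$ lets me factor out a power of $(1+k)$ and apply H\"older with exponents $(2,2r,2r/(r-1))$ for the $E$-term and $(m,m')$ for the $f$-term. The hypothesis $\te+\g<2^*/2$ combined with $r>(\g+2)N/[2(\g+1)]$ exactly ensures that the Sobolev exponent $2r(\g+\te)/(r-1)$ is $\le 2^*$, while $m>N/2$ gives both $m'(\te+1)\le 2^*$ and a super-linear power of $|A_k|$ at the end. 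After absorbing the $\|DG_k(\un)\|_{\L 2}$ factors via Young's inequality (using that $|A_k|\to 0$ for $k$ large in order to dominate any potentially super-quadratic contributions), I expect to arrive at $\iom|G_k(\un)|\le\beta|A_k|^{1+\eta}$ with $\eta>0$ and $\beta$ independent of $n$, and then Lemma~\ref{lemA} yields $\|\un\|_{\Linf}\le C$ uniformly.

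Once the uniform $L^\infty$ bound is in hand, $a(x,T_n(\un))\ge\al(1+C)^{-\te}$ independently of $n$, so testing \eqref{wkf} with $\vf=\un$ and using H\"older--Young on the right-hand side immediately produces a uniform bound on $\|D\un\|_{\L 2}$. Passage to the limit is then routine: up to a subsequence $\un\wc u$ in $\wpp$ and $\un\to u$ a.e., a Boccardo--Murat style argument (enabled by uniform coercivity and the bounded right-hand side) yields $D\un\to Du$ a.e., and one can take $n\to\infty$ in every term of \eqref{wkf} to recover \eqref{weak} for $u\in\wpp\cap\Linf$. The main obstacle is clearly the $L^\infty$ step, where the non-standard test function $\phi(\un)-\phi(T_k(\un))$ is tailored to simultaneously neutralize the degeneracy in $a$ and to accommodate the power nonlinearity $\un^\g$; all three numerical assumptions on $m$, $r$ and $\te+\g$ are consumed together in tracking the powers of $|A_k|$ through the H\"older--Sobolev chain.
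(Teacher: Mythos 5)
Your overall architecture (approximate problems, uniform a priori bounds, Stampacchia's Lemma~\ref{lemA}, weak limit) matches the paper's, and your function $\phi(t)=\int_0^t(1+|s|)^{\te}\,ds$ is, up to the constant $\te+1$, exactly the paper's test function $[(1+|\un|)^{\te+1}-1]\mathrm{sgn}(\un)$. But you deploy it only in truncated form and attempt the $L^\infty$ bound \emph{first}, and this is where the argument breaks. In your H\"older chain on $A_k$ you must control quantities such as $\bigl(\int_{A_k}(1+|\un|)^{2r(\te+\g)/(r-1)}\bigr)^{(r-1)/(2r)}$ and $\int_{A_k}|f|(1+|\un|)^{\te}|G_k(\un)|$; at that stage you have no uniform bound on $\un$ in any Lebesgue space, so the only available device is your substitution $1+|\un|\le(1+k)(1+|G_k(\un)|)$. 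That substitution injects factors $(1+k)^{\te+\g}$ and $(1+k)^{\te}$ into the final estimate, so what you actually reach is $\iom|G_k(\un)|\le\beta(k)\,|A_k|^{1+\eta}$ with $\beta(k)$ growing polynomially in $k$ --- and Lemma~\ref{lemA} requires $\beta$ to be a fixed constant. Likewise, the claim that $|A_k|\to0$ (uniformly in $n$), which you invoke to absorb the super-quadratic contributions of $(1+|G_k|)^{\te+\g}$, has no justification before some uniform integral bound is established. A smaller but genuine slip: $m>N/2$ gives $m'<2^*/2$, hence $m'(\te+1)\le 2^*$ only when $\te\le1$, whereas the theorem allows $\te$ up to $2^*-2$.

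The repair is essentially to reorder the proof as the paper does. First test \eqref{wkf} globally with $[(1+|\un|)^{\te+1}-1]\mathrm{sgn}(\un)$ (your $\phi(\un)$ with $k=0$): the weight $(1+|\un|)^{\te}$ in $D\vf$ cancels the degeneracy and Young--H\"older--Sobolev yield $\norma{D\un}{2}+\norma{\un}{2^*}\le C$ uniformly in $n$. Only then attack $L^\infty$: with $\norma{\un}{2^*}\le C$ in hand, the powers $(1+|\un|)^{\te+\g}$ and $|\un|^{\g}$ can be H\"oldered against the $L^{2^*}$ norm over $A_k$, producing $k$-independent constants and a genuine excess power of $|A_k|$ coming from \eqref{eq11}. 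Note also that the paper's $L^\infty$ step uses the opposite change of unknown from yours: it tests with $G_k(H(\un))$ where $H'(s)=(1+|s|)^{-\te}$, so the degeneracy is absorbed into the new unknown $H(\un)$ rather than compensated by a test function that grows like $(1+|s|)^{\te}$; this is precisely what keeps all constants independent of $k$ and makes Lemma~\ref{lemA} applicable. Your limit passage at the end is fine once the uniform $\L\infty$ and $\wpp$ bounds are secured.
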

\begin{proof}
Consider $\vf= [(1+|\un|)^{\te+1}-1]\textrm{sgn}(\un)$ as a test function in \eqref{wkf}. We have
\[
\alpha (\te+1)\iom |D \un|^2\le \iom (1+|\un|)^{\te+\g}|E||D \un|+\iom |f|(1+|\un|)^{\te+1}.
\] 
Applying Young's inequality to the above:
\begin{equation}\label{eq3}
\frac{\al(\te+1)}{2} \iom |D \un|^2\le \frac{1}{2\al(\te+1)}\iom (1+|\un|)^{2(\te+\g)}|E|^{2}+\iom |f|(1+|\un|)^{\te+1},
\end{equation}
using Sobolev's and Holder's inequalities:
\begin{equation}
\begin{split}
\frac{\al(\te+1)}{2} \left(\iom (1+ |\un|)^{2^{*}}\right)^{\frac{2}{2^{*}}} &\le \frac{1}{2\al(\te+1)}\left(\iom |E|^{2r}\right)^{\frac{1}{r}}\left(\iom (1+|\un|)^{\frac{2r(\te+\g)}{r-1}}\right)^{\frac{r-1}{r}}\\&+\left(\iom |f|^{m}\right)^{\frac{1}{m}}\left(\iom (1+|\un|)^{\frac{m(\te+1)}{m-1}}\right)^{\frac{m-1}{m}}
\end{split}
\end{equation}
Choosing $r,m$ such that $2^{*}=\frac{2r(\te+\g)}{r-1}=\frac{m(\te+1)}{m-1}$, which is to say:
\[
r=\frac{2^{*}}{2^{*}-2(\te+\g)}\text{ and } m=\frac{1}{2^{*}-\te-1}
\]
Simplifying we have:
\[
\left(\iom  |\un|^{2^{*}}\right)^{\frac{2}{2^{*}}-\frac{1}{m'}}\le C(\norma{E}{2r}^{2} + \norma{f}{m})
\]
Which implies by \eqref{eq3} that:
\[
\norma{D\un}{2}\le C.
\]
Now we prove that $\un$ is bounded in $\L\infty$. Define
\[
H(s)=\int_{0}^{s}\frac{1}{(1+|s|)^{\te}}.
\]
Set $A_{k}=\{ x\in\Om\,|\, H(\un)>k \}$, taking $\vf= G_{k}(H(\un))$ as a test function in \eqref{wkf} we obtain
\[
\al \int_{A_{k}} |DH(\un)|^2\le \int_{A_{k}} |\un|^{\g}|E||D H(\un)|+\int_{A_{k}} |f|G_{k}(H(\un))
\]
Simplifying using Holder's inequality:
\[
\al \int_{A_{k}} |DH(\un)|^2\le \left(\int_{A_{k}} |\un|^{2\g}|E|^{2}\right)^{\frac12}\left(\int_{A_{k}} |DH(\un)|^{2}\right)^{\frac12}+\left(\int_{A_{k}} |f|^{2_{*}}\right)^{\frac{1}{2_{*}}}\left(\int_{A_{k}} |DH(\un)|^{2}\right)^{\frac12}
\]
We conclude that
\[
\al\left(\int_{A_{k}} |DH(\un)|^{2}\right)^{\frac12}\le \left(\int_{A_{k}} |\un|^{2\g}|E|^{2}\right)^{\frac12}+\left(\int_{A_{k}} |f|^{2_{*}}\right)^{\frac{1}{2_{*}}}
\]
Using Holder's inequality again:
\[
 \al\left(\int_{A_{k}} |DH(\un)|^{2}\right)^{\frac12}\le \left(\int_{A_{k}} |\un|^{2^{*}}\right)^{\frac{\g}{2^{*}}}\left(\int_{A_{k}} |E|^{\frac{22^{*}}{2^{*}-2\g}}\right)^{\frac{2^{*}-2\g}{22^{*}}}+\norma{f}{m}|A_{k}|^{\frac{m-2_{*}}{2_{*}m}},
\]
Now, by Sobolev's inequality:
\[
\left(\int_{A_{k}} |G_{k}(H(\un))|^{2^{*}}\right)^{\frac{1}{2^{*}}}\le C\left(\norma{E}{r}|A_{k}|^{\frac{r-\frac{22^{*}}{2^{*}-2\g}}{r}\frac{2^{*}-2\g}{22^{*}}}+\norma{f}{m}|A_{k}|^{\frac{m-2_{*}}{2_{*}m}}\right),
\]
Finally, recall that:
\[
\int_{A_{k}} |G_{k}(H(\un))|\le \left(\int_{A_{k}} |G_{k}(H(\un))|^{2^{*}}\right)^{\frac{1}{2^{*}}}|A_{k}|^{\frac{1}{2_{*}}}
\]
Combing this with the condition \eqref{eq11}, we conclude that by Lemma \ref{lemA}, $\norma{H(\un)}{\infty}\le C$, but since $\lim_{s\to\pm\infty}H(s)=\pm\infty$, we deduce
\[
\norma{\un}{\infty}\le C.
\]
Choose $n>C$ with $T_{n}(\un)=\un$, then $\un$ is a weak solution.
\end{proof}
\vspace{0.1in}
\begin{remark}\label{rm2}
This theorem reinforces what happens in the case $E=0$. Since in that case it's possible to obtain nonexistence results if $\te>1$ and existence only if the source is small.

For example, if we take $N=3, \te=2, E=0$ and a constant $A>0$ large enough then the problem
\begin{equation} 
\begin{cases}
-\Div\left(\frac{Du}{(1+|u|)^{\te}}\right)= A\qquad & \mbox{in } B_{1}(0),\\
u (x) = 0 & \mbox{on }  \partial B_{1}(0),
\end{cases}
\end{equation}
doesn't have a weak solution $u\in\wpp$, see  \cite[Sec.~14.3]{boc13}. Whereas using the theorem above and noting that $\te<1$, we have bounded weak solutions without smallness condition on the source $f(x)$.
\end{remark}
\subsection{Low summability of $E(x)$, i.e. $E\in [\L 2]^{N}$}
In our next result, we drop the summability assumption on $E(x)$ and as a result, finite energy solutions are not guaranteed to exist anymore and we can only hope for distributional solutions as the theorem below shows.
\vspace{0.1in}
\begin{theorem} \label{bounds2}
Suppose \eqref{degc} holds with $0<\te<1$ and  $0<\g<1$ satisfying
\[
\te+2\g<1,
\]
$E\in [\L 2]^{N}$, $f\in\L m$ with $m>\frac{q^{*}}{q^{*}-1+\te+2\g}$, where $q=\frac{2N (1-\te-\g)}{N-2(\te+\g)}$. Then the Dirichlet problem \eqref{main} has a distributional solution $u\in\textbf{W}_{0}^{1,q}$.
\end{theorem}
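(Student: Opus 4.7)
The strategy is by approximation. I would start from the bounded weak solutions $\un$ of \eqref{wkf}, establish a uniform a priori bound in $\mathbf{W}_0^{1,q}(\Om)$, and then pass to the limit.

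For the a priori bound, the crucial choice is to test \eqref{wkf} with $\vf = [(1+|\un|)^{\la}-1]\,\mathrm{sgn}(\un)$, where $\la := 1-\te-2\g > 0$ by hypothesis; $\vf$ is admissible because $\un\in\Linf$ at this stage. The degenerate coercivity \eqref{degc} produces on the left $\al\la\iom (1+|\un|)^{-2(\te+\g)}|D\un|^2$. The key cancellation is on the right: applying Young's inequality to the convection term absorbs half of the coercive quantity and leaves a residue of the form $C\iom (1+|\un|)^{\la-1+2\g+\te}|E|^2$. The exponent $\la-1+2\g+\te$ collapses to $0$ for exactly this choice of $\la$, so only $E\in[\L 2]^N$ is required to bound this contribution by $C\|E\|_{\L 2}^2$. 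The forcing contribution $\iom |f|(1+|\un|)^{1-\te-2\g}$ is handled by Hölder: it is bounded by $\|f\|_{\L m}\|(1+|\un|)\|_{\L{m'(1-\te-2\g)}}^{1-\te-2\g}$, and the hypothesis $m>q^*/(q^*-1+\te+2\g)$ is precisely what guarantees $m'(1-\te-2\g)<q^*$, so that this norm is controlled via the Sobolev embedding $\mathbf{W}_0^{1,q}\hookrightarrow \L{q^*}$.

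To convert the weighted estimate into a bound on $\|D\un\|_{\L q}$, I would apply Hölder with exponent $2/q$:
\[
\iom |D\un|^q \le \Bigl(\iom (1+|\un|)^{-2(\te+\g)}|D\un|^2\Bigr)^{q/2}\Bigl(\iom (1+|\un|)^{\frac{2q(\te+\g)}{2-q}}\Bigr)^{\frac{2-q}{2}}.
\]
A direct computation verifies that $2q(\te+\g)/(2-q)=q^*$ precisely when $q=2N(1-\te-\g)/(N-2(\te+\g))$, so the second factor is controlled by $\|\un\|_{\L{q^*}}^{q(\te+\g)}$ and hence by Sobolev by $\|D\un\|_{\L q}^{q(\te+\g)}$. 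Writing $X_n:=\|D\un\|_{\L q}$ and combining both estimates yields an inequality of the form
\[
X_n^q \le C\bigl(1+X_n^{1-\te-2\g}\bigr)^{q/2}(1+X_n)^{q(\te+\g)}.
\]
For large $X_n$ the right-hand side grows like $X_n^{q[(1-\te-2\g)/2+(\te+\g)]}=X_n^{q(1+\te)/2}$, and since $\te<1$ the exponent $q(1+\te)/2$ is strictly less than $q$; this self-improving inequality forces $X_n\le C$.

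Finally, for the passage to the limit, the uniform $\mathbf{W}_0^{1,q}$ bound yields (along a subsequence) $\un\to u$ a.e.\ and $D\un\wc Du$ weakly in $\L q$. The main obstacle, as usual for degenerate coercive problems, is establishing almost-everywhere convergence of the gradients in order to identify the limit of the nonlinear principal part $a(x,T_n(\un))D\un$ tested against $\mathcal{C}_0^\infty(\Om)$ functions. I would invoke a Boccardo--Murat type strong compactness argument adapted to the degenerate setting, using truncation test functions of the form $T_k(\un-T_h(u))$ together with the weighted energy estimate already derived; the low summability $E\in[\L 2]^N$ is the delicate ingredient, but the compact embedding $\mathbf{W}_0^{1,q}\hookrightarrow \L s$ for $s<q^*$ provides enough room to handle the convection term. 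Once a.e.\ convergence of gradients is in hand, $T_n(\un^\g)E_n\to u^\g E$ in $[L^1_{loc}(\Om)]^N$ follows from Vitali's theorem (using that $|\un|^\g$ is uniformly bounded in $\L{q^*/\g}$ and $E\in\L 2$), while $f_n\to f$ in $\L 1$ is immediate, giving the distributional formulation \eqref{dis}.
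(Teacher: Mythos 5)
Your a priori estimate is essentially the paper's own argument: the same test function $[(1+|\un|)^{\la}-1]\,\mathrm{sgn}(\un)$ with $\la=1-\te-2\g$, the same Young absorption that leaves only $\norma{E}{2}^{2}$, and the same choice of $q$ making $\tfrac{2q(\te+\g)}{2-q}=q^{*}$; your bookkeeping of the resulting self-improving inequality for $\norma{D\un}{q}$ (treating the $f$-term by H\"older, which keeps the exponent $\la$ rather than $\la m'$ and is exactly what makes the stated lower bound on $m$ close the loop, since $\tfrac{\la}{2}+\te+\g=\tfrac{1+\te}{2}<1$) is if anything more careful than the paper's. The one genuine divergence is the limit passage: you present almost-everywhere convergence of the gradients as the main obstacle and defer it to an unproved Boccardo--Murat adaptation, but that machinery is not needed here because the principal part $a(x,T_{n}(\un))D\un$ is \emph{linear} in the gradient. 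Since $|a|\le\beta$ and $a$ is Carath\'eodory, the a.e.\ convergence $\un\to u$ already gives $a(x,T_{n}(\un))D\vf\to a(x,u)D\vf$ strongly in $\L{q'}$ by dominated convergence, and pairing this with $D\un\rightharpoonup Du$ weakly in $[\L q]^{N}$ identifies the limit of the first integral; only the convection term needs the Vitali equi-integrability argument you describe, for which $\norma{\un}{q^{*}}\le C$ and $E\in[\L 2]^{N}$ suffice because $\tfrac{q^{*}}{q^{*}-\g}<2$. So the proof is correct, but you should either run this direct argument or note explicitly that the a.e.\ gradient convergence you invoke is dispensable.
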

\begin{proof}
Set $\vf= [(1+|\un|)^{\la}-1]\textrm{sgn}(\un)$ as a test function in \eqref{wkf}, where $\la<1$ will be specified later. We have
\[
\alpha \la\iom (1+|\un|)^{\la-1-\te}|D \un|^2\le \iom(1+|\un|)^{\g+\la-1} |E||D \un|+\iom |f||\un|^{\la}
\] 
Using Young's inequality we obtain:
\[
\alpha \la\iom (1+|\un|)^{\la-1-\te}|D \un|^2\le \frac{1}{2\alpha \la}\iom |E|^{2}+\frac{\alpha \la}{2}\iom (1+|\un|)^{2(\g+\la-1)}|D \un|^{2}+\iom |f||\un|^{\la}
\]
Now choose $\la=1-\te-2\g$, simplifying we have:
\begin{equation}\label{eq1}
\frac{\alpha (1-\te-2\g)}{2}\iom \frac{|D \un|^2}{(1+|\un|)^{2(\te+\g)}}\le \frac{1}{2\alpha \la}\iom |E|^{2}+\iom |f||\un|^{\la}.
\end{equation}
For any $q<2$, by Holder's inequality using $\frac 2 q$ and $(\frac{2}{q})'$:
\begin{equation}\label{eq7}
\begin{split}
C \left(\iom |\un|^{q^{*}}\right)^{\frac{q}{q^{*}}}\le \iom |D \un|^q&\le \iom \frac{(1+|\un|)^{q(\te+\g)}|D \un|^q}{(1+|\un|)^{q(\te+\g)}}\\&\le \left(\iom \frac{|D \un|^2}{(1+|\un|)^{2(\te+\g)}}\right)^{\frac q 2} \left( \iom (1+|\un|)^{\frac{2q(\te+\g)}{2-q}}\right)^{\frac{2-q}{2}}
\end{split}
\end{equation}
Combining this with \eqref{eq1}:
\[
 \left(\iom |\un|^{q^{*}}\right)^{\frac{2}{q^{*}}}\le C\left( \iom (1+|\un|)^{\frac{2q(\te+\g)}{2-q}}\right)^{\frac{2-q}{q}}
\left[\frac{1}{2\alpha \la}\iom |E|^{2}+\iom |f||\un|^{\la}\right]
\]
Set $q=\frac{2N (1-\te-\g)}{N-2(\te+\g)}$, then $q^{*}=\frac{2q(\te+\g)}{2-q}$ and $\frac{2}{q^{*}}>\frac{2-q}{q}$. We have:
\[
 \left(\iom |\un|^{q^{*}}\right)^{\frac{2}{q^{*}}-\frac{2-q}{q}}\le C
\left[\iom |E|^{2}+\iom |f|^{m}+\iom |\un|^{\la m'}\right]
\]
Since $m$ satisfies $q^{*}=\la m'$ we obtain
\[
\norma{\un}{q^{*}}\leq C,
\]
Notice that by \eqref{eq7} we also have:
\[
\norma{D\un}{q}\leq C.
\]
It follows that $\un$ is bounded in $\textbf{W}_{0}^{1,q}$ and up to subsequence $\un\wc u$.

 By the dominated convergence theorem, we can easily pass the limit in the first integral in \eqref{wkf} if we assume $\vf\in\mathcal{C}^{\infty}_{0}(\Om)$. Similarly, we can pass the limit in the third integral, the only part not so obvious is the second integral. 

Notice that given $M\subset \Om$ measurable set:
\[
\int_{M}\left(T_{n}(\un)^{\g}E_{n}\right) D \varphi \le C\norma{D \varphi}{\infty}\norma{\un}{q^{*}}^{\g}\left(\int_{M} |E|^{\frac{q^{*}}{q^{*}-\g}}\right)^{\frac{q^{*}-\g}{q^{*}}}
\]
Therefore, the integral above is equi-integrable and the result follows from Vitali's convergence theorem.
\end{proof}
\subsection{$f(x)$ has low summability}
Now we assume that $f$ is integrable only and study the effect of this condition on the existence of solutions. As we should expect, low integrability of $f(x)$ implies that finite energy solutions don't exist, in fact, they are not even well defined. The theorem below is similar to the one before it, the major difference is the summability of $f(x)$, which in this case is kept to a minimum.
\vspace{0.1in}
\begin{theorem} \label{bounds3}
Suppose \eqref{degc} holds with $\te,\g>0$ satisfying
\[
\te+\g<\frac 1 2.
\]
Let $q=\frac{2N(1-(\te+\g))}{N-2(\te+\g)}$, $E\in [\L r]^{N}$, where $r>\frac{q^{*}}{\g}$, $f\in\L 1$. Then the Dirichlet problem \eqref{main} has a distributional solution $u\in\textbf{W}_{0}^{1,q}$.
\end{theorem}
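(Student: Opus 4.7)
The plan is to closely follow the strategy of Theorem~\ref{bounds2}, compensating for the lower summability of $f$ by the strong summability of $E$. I would denote by $u_{n}$ the solutions of the approximating equation~\eqref{wkf}, and test with $\varphi_{n}=[(1+|u_{n}|)^{\lambda}-1]\textrm{sgn}(u_{n})$, where $\lambda:=1-\te-2\g$. This $\lambda$ is positive because the hypothesis $\te+\g<\tfrac{1}{2}$ forces $\te+2\g<1$. Repeating the computation from the proof of Theorem~\ref{bounds2}---in particular the Young splitting tuned so that the $u_{n}$-weight on the right cancels out---would yield the Caccioppoli-type estimate
\[
\iom\frac{|Du_{n}|^{2}}{(1+|u_{n}|)^{2(\te+\g)}}\le C\norma{E}{2}^{2}+\iom|f|(1+|u_{n}|)^{\lambda}.
\]
The H\"older--Sobolev chain used in the proof of Theorem~\ref{bounds2} (with the same $q=\frac{2N(1-\te-\g)}{N-2(\te+\g)}$) then leads to a self-referential bound of the form
\[
\norma{u_{n}}{q^{*}}^{2(1-\te-\g)}\le C+C\iom|f|(1+|u_{n}|)^{\lambda}.
\]
The exponent $2(1-\te-\g)$ is strictly larger than $\lambda$ (because $\te<1$) and strictly larger than $1+\g$ (because $\te+2\g<1$), so any subcritical power of $\norma{u_{n}}{q^{*}}$ appearing on the right may be absorbed on the left.

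The main obstacle is controlling $\iom|f|(1+|u_{n}|)^{\lambda}$ with $f\in\L{1}$ only. In Theorem~\ref{bounds2} this term is handled by H\"older with $f\in\L{m}$ and $\lambda m'=q^{*}$, which produces the clean factor $\norma{u_{n}}{q^{*}}^{\lambda}$; for $m=1$ that route is closed. The role of the hypothesis $r>q^{*}/\g$ is precisely to provide an alternative mechanism: I would redo the Young splitting on the $E$-integral with H\"older exponents tailored to $E\in[\L{r}]^{N}$, producing a bound of the form $C\norma{E}{r}\norma{u_{n}}{q^{*}}^{\g}\norma{Du_{n}}{q}$. Combined with Sobolev ($\norma{u_{n}}{q^{*}}\le C\norma{Du_{n}}{q}$) this becomes the subcritical contribution $\norma{u_{n}}{q^{*}}^{\g+1}$, absorbable since $2(1-\te-\g)>1+\g$. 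The residual $|f|$-dependence is then tamed through interpolation between $\L{1}$ and $\L{q^{*}}$ tied to the self-referential estimate; this is the delicate step I expect to require the most careful bookkeeping.

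Once the uniform bounds $\norma{u_{n}}{q^{*}}+\norma{Du_{n}}{q}\le C$ are established, passage to the limit proceeds exactly as in Theorem~\ref{bounds2}: up to a subsequence $u_{n}\wc u$ in $\textbf{W}_{0}^{1,q}(\Om)$ and $u_{n}\to u$ a.e.~in $\Om$. Convergence of the first and third integrals in~\eqref{wkf} is routine; the vector-field term $T_{n}(u_{n}^{\g})E_{n}\cdot D\varphi$ is handled by Vitali's convergence theorem via the equi-integrability estimate
\[
\int_{M}|T_{n}(u_{n}^{\g})E_{n}\cdot D\varphi|\le C\norma{D\varphi}{\infty}\norma{u_{n}}{q^{*}}^{\g}\Bigl(\int_{M}|E|^{q^{*}/(q^{*}-\g)}\Bigr)^{(q^{*}-\g)/q^{*}},
\]
which is controlled because $r>q^{*}/\g>q^{*}/(q^{*}-\g)$.
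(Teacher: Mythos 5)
Your limit-passage step and the use of the condition $r>\frac{q^{*}}{\g}$ for equi-integrability match the paper, but the core a priori estimate in your proposal has a genuine gap. Testing with the unbounded function $[(1+|\un|)^{\la}-1]\mathrm{sgn}(\un)$ inevitably produces the term $\iom |f|(1+|\un|)^{\la}$ on the right-hand side, and with $f\in\L1$ only there is no mechanism to control it: the only H\"older pairings available are $\norma{f}{1}\,\norma{(1+|\un|)^{\la}}{\infty}$ (an $L^\infty$ bound you do not have) or $\norma{f}{(q^{*}/\la)'}\,\norma{\un}{q^{*}}^{\la}$ (which requires $f$ in a Lebesgue space strictly better than $\L1$). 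The ``interpolation between $L^{1}$ and $L^{q^{*}}$'' you invoke would have to interpolate $f$ itself, which is impossible when $f$ lies only in $\L1$; rewriting it as $\iom|f|\,|\un|^{\la}\le\norma{f}{1}^{1-\la/q^{*}}(\iom|f|\,|\un|^{q^{*}})^{\la/q^{*}}$ just reproduces the same uncontrollable quantity. This is precisely why the paper abandons power-type test functions here and instead tests with the \emph{bounded} function $T_{1}(G_{k}(\un))$: the source term then contributes at most $\int_{A_{k}}|f|\le\norma{f}{1}$ at each level $k$, and the weighted gradient bound $\iom |D\un|^{2}(2+|\un|)^{-2(\te+\g)}\le C$ is obtained by working on the slices $B_{k}=\{k\le|\un|<k+1\}$ and summing over $k$ (the Boccardo--Gallou\"et device for $L^{1}$ data). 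Your proposal is missing this idea, and without it the self-referential estimate never closes.

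There is also a concrete error in the absorption argument: you claim $2(1-\te-\g)>1+\g$ follows from $\te+2\g<1$, but this inequality is equivalent to $2\te+3\g<1$, which the hypotheses do not imply. For instance $\te=0.01$, $\g=0.48$ satisfies $\te+\g<\tfrac12$ and $\te+2\g<1$, yet $2(1-\te-\g)=1.02<1.48=1+\g$, so the contribution $\norma{\un}{q^{*}}^{\g+1}$ you propose to generate from the $E$-term would be supercritical and could not be absorbed. (In the paper's argument this issue never arises because the $E$-term is handled by Young's inequality against the degenerate gradient term before any Sobolev embedding, at the cost of the stronger summability $E\in[\L{r}]^{N}$ with $r>q^{*}/\g$, which is used only for the equi-integrability at the limit stage.)
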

\begin{proof}
The proof is similar to the one above. We begin by fixing $k>0$ and setting $\vf= T_{1}(G_{k}(\un))$ as a test function in \eqref{wkf}. We have
\[
\alpha \int_{B_{k}} |D \un|^2 \le (2+k)^{\te+\g}\int_{B_{k}}|E| |D \un|+(2+k)^{\te}\ioa |f|,
\] 
where $B_{k}=\{ x\in\Om\, | \,k\le |\un|< k+1 \}$ and $A_{k}=\{ x\in\Om\, | \,|\un|\ge k \}$ . Using Young's inequality we obtain:
\[
\alpha \int_{B_{k}} |D \un|^2 \le \frac{(2+k)^{2(\te+\g)}}{2\al}\int_{B_{k}}|E|^{2}+\frac{\al}{2}\int_{B_{k}}|D \un|^{2}+(2+k)^{\te}\left(\int_{B_{k}} |f| + |\{ |\un| \ge k+1\}|\right)
\]
Simplifying we have:
\[
\frac{\al}{2} \int_{B_{k}} |D \un|^2 \le \frac{(2+k)^{2(\te+\g)}}{2\al}\int_{B_{k}}|E|^{2}+(2+k)^{\te}\left(\int_{B_{k}} |f| + C\right)
\]
Hence:
\[
\frac{\al}{2} \int_{B_{k}} \frac{|D \un|^2}{(2+|\un|)^{2(\te+\g)}} \le \frac{1}{2\al}\int_{B_{k}}|E|^{2}+\int_{B_{k}} |f|+C,
\]
Taking the sum from $k=0$ to $k=\infty$:
\[
\iom \frac{|D \un|^2}{(2+|\un|)^{2(\te+\g)}} \le C\left(\iom |E|^{2}+\iom |f|+1\right),
\]
For any $q<2$, by Holder's inequality using $\frac 2 q$ and $(\frac{2}{q})'$:
\[
\int_{\Omega} |D(2+|u_{n}|)|^{q}=\int_{\Omega} \frac{(2+|\un|)^{q(\te+\g)}}{(2+|\un|)^{q(\te+\g)}}|Du_{n}|^{q}\le C \left( \iom (2+|\un|)^{\frac{2q(\te+\g)}{2-q}}\right)^{\frac{2-q}{2}}
\]
We choose $q$ such that $\frac{2q(\te+\g)}{2-q}=q^{*}$ , which is to say
\[
q=\frac{2N(1-(\te+\g))}{N-2(\te+\g)}
\]
As before, $\un$ is bounded in $\textbf{W}_{0}^{1,q}$ and up to subsequence $\un\wc u$. Using exactly the same arguments of the proof of theorem \ref{bounds}, we conclude that $u\in\textbf{W}_{0}^{1,q}$ is a distributional solution.
\end{proof}
\vspace{0.1in}
\begin{remark}
The condition $r>\frac{q^{*}}{\g}$ is needed for Holder's inequality when proving that $\int |\un|^{\g}|E|$ is equi-integrable.
\end{remark}
\section{The presence of lower order term}
In this last section we consider the effects on the existence and regularity of the presence of a lower order term in problem \eqref{main}. More precisely, we consider
\begin{equation} \label{main2}\tag{L}
\begin{cases}
 -\Div(a(x,u)Du) + u = -\Div\left(u^{\g}E(x)\right)+f(x) \qquad & \mbox{in } \Omega,\\
u (x) = 0 & \mbox{on }  \partial \Omega,
\end{cases}
\end{equation} 
where $\g>0$ and $a(x,s)$ satisfies \eqref{degc}.

Similar to the previous case, for a fixed $n>0$, Schauder’s fixed point theorem can be used to guarantee the existence of weak solution $u_{n}\in\wpp\cap\L\infty$, satisfying
\begin{equation}\label{eq4}
\int_{\Omega} a(x,T_{n}(u_{n}))Du_{n} D \varphi +\iom \un\vf =\int_{\Omega}\left(T_{n}(\un)^{\g}E_{n}\right) D \varphi  + \int_{\Omega} f_{n} \varphi \quad \forall \varphi \in  \wpp.
\end{equation}

We need the following lemma first:
\vspace{0.1in}
\begin{lemmaletter}\label{lemB}
Suppose $\te+2\g<2$, $E\in [\L {2r}]^{N}$ with $r\ge \frac{m}{2-2\g-\te}$, $f\in\L m$, with $m\ge2$. Then:
\[
\iom |\un|^{m} \le  C\left(\iom|E|^{\frac{2m}{2-2\g-\te}}+\iom |E|^{2}+ \iom |f|^{m} \right).
\]
If $m=1$, for any $\te>0,\g>1$ and $r\ge 1$ we have:
\[
\iom |\un| \le \iom |f|.
\]
\end{lemmaletter}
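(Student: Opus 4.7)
The plan is to test \eqref{eq4} against carefully chosen powers of $\un$ so that the coercive gradient term is manifestly nonnegative and the lower order term $\iom \un\vf$ on the left produces exactly the quantity I want to bound. For $m\ge 2$ I will take $\vf = |\un|^{m-2}\un \in \wpp$ (legitimate since $\un \in \wpp\cap\Linf$), so that $D\vf = (m-1)|\un|^{m-2}D\un$ and the lower order term contributes $\iom|\un|^m$. For $m=1$, where no such power is available, I will instead use the family $\vf_\e = T_\e(\un)/\e \in \wpp\cap\Linf$ and send $\e\to 0^+$, so that $\vf_\e \to \mathrm{sgn}(\un)$.

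For the first estimate, after substitution, \eqref{eq4} becomes
\[
(m-1)\iom a(x,T_n(\un))|\un|^{m-2}|D\un|^2 + \iom|\un|^m = (m-1)\iom T_n(\un)^\g|\un|^{m-2}E_n\cdot D\un + \iom f_n|\un|^{m-2}\un.
\]
The $f_n$ term is dispatched by Young's inequality as $\tfrac14\iom|\un|^m + C\iom|f|^m$. For the delicate $E_n$ term, the key split is
\[
|\un|^{\g+m-2}|E||D\un| = \frac{|\un|^{(m-2)/2}|D\un|}{(1+|\un|)^{\te/2}}\cdot|\un|^{\g+(m-2)/2}(1+|\un|)^{\te/2}|E|,
\]
applying Young so that the squared first factor is absorbed into the coercive lower bound $(m-1)\al \iom |\un|^{m-2}|D\un|^2/(1+|\un|)^\te$ afforded by \eqref{degc}. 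What remains is $\iom|\un|^{2\g+m-2}(1+|\un|)^\te|E|^2$, which after the crude estimate $(1+|\un|)^\te\le C(1+|\un|^\te)$ reduces to a piece bounded by $C\iom|E|^2$ plus $C\iom|\un|^{2\g+m-2+\te}|E|^2$. Hölder with conjugate exponents $r^*/(r^*-1)$ and $r^* := m/(2-2\g-\te)$ is tuned precisely so the $|\un|$ factor acquires exponent $m$; a final Young inequality absorbs it into the LHS, producing the stated bound. The hypothesis $r \ge r^*$ ensures $\iom|E|^{2r^*} = \iom|E|^{2m/(2-2\g-\te)}$ is finite on the bounded $\Om$.

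For $m=1$, writing $\vf_\e = T_\e(\un)/\e$ gives $D\vf_\e = \e^{-1}D\un\,\mathds{1}_{\{|\un|<\e\}}$ and $|\vf_\e|\le 1$. The coercive term is nonnegative and can be dropped, leaving
\[
\iom \un\vf_\e \le \frac{1}{\e}\iom T_n(\un)^\g E_n\cdot D\un\,\mathds{1}_{\{|\un|<\e\}} + \iom f_n\vf_\e.
\]
On $\{|\un|<\e\}$ one has $|T_n(\un)^\g|\le \e^\g$, so the middle integral is dominated by $\e^{\g-1}\iom|E||D\un|$ which vanishes as $\e\to 0^+$ since $\g>1$ and $|E||D\un|\in L^1(\Om)$. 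Monotone convergence upgrades $\iom \un\vf_\e = \iom|\un|T_\e(|\un|)/\e$ to $\iom|\un|$, while $\iom f_n\vf_\e \le \iom|f|$ uniformly; sending $\e\to 0^+$ yields the claim. The main obstacle lies in the first part: the Young split of the $E$-term must be simultaneously compatible with the degenerate coercive lower bound (hence the weight $(1+|\un|)^{\te/2}$) and match the sharp Hölder exponent $r^* = m/(2-2\g-\te)$ dictated by the hypothesis, which is finite precisely because $\te+2\g<2$.
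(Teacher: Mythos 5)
Your proof is correct and takes essentially the same route as the paper's: test the approximate equation with a power of the solution so that the zero-order term produces $\int_\Omega |u_n|^m$, absorb the gradient contribution of the $E$-term into the degenerate coercive term via Young, and then apply H\"older/Young with the exponent $m/(2-2\gamma-\theta)$, with the $m=1$ case handled identically by $T_\varepsilon(u_n)/\varepsilon \to \mathrm{sgn}(u_n)$ and $\gamma>1$. The only cosmetic difference is your choice of $|u_n|^{m-2}u_n$ in place of the paper's $(1+|u_n|)^{m-1}\mathrm{sgn}(u_n)$, which obliges you to handle the weight $(1+|u_n|)^{\theta}$ separately but changes nothing essential.
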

\begin{proof}
If $m=1$, fix $k>0$ and take $\vf=\frac{T_{k}(\un)}{k}$ as a test function. We have, ignoring the first positive term:
\[
 \iom \un\frac{T_{k}(\un)}{k} \le k^{\g-1}\int_{\Omega}|E||D\un| + \int_{\Omega} |f|
\]
Taking the limit $k\to 0$ and using Fatou's lemma:
\[
\iom |\un| \le\int_{\Omega} |f|
\]
Fix $\la > 1$, take $\vf=|1+|\un||^{\la-2}(1+|\un|)\mathrm{sgn}(\un)$ as a test function to obtain:
\[
\al (\la-1)\iom(1+|\un|)^{\la-2-\te}|D\un|^{2}+\iom |\un|^{\la} \le C \iom (1+|\un|)^{\la-2+\g}|E||D\un| + \int_{\Omega} |f||\un|^{\la-1}
\]
After using Young's inequality, that becomes:
\[
\iom |\un|^{\la} \le C \iom (1+|\un|)^{2[(\la-2+\g)-\frac{\la-2-\te}{2}]}|E|^{2} + \int_{\Omega} |f||\un|^{\la-1}
\]
Simplifying:
\[
\iom |\un|^{\la} \le \frac{1}{4} \iom |\un|^{2r'[(\la-2+\g)-\frac{\la-2-\te}{2}]}+ C\left(\iom|E|^{2r}+\iom |E|^{2}+ \iom |f|^{m} \right) + \frac{1}{4}\iom |\un|^{m'(\la-1)}
\]
Choosing $\la=m$ and $r'=\frac{m}{m-2+2\g+\te}$ we obtain:
\begin{equation}\label{eq5}
\frac{1}{2}\iom |\un|^{m} \le  C\left(\iom|E|^{\frac{2m}{2-2\g-\te}}+\iom |E|^{2}+ \iom |f|^{m} \right) 
\end{equation}
\end{proof}
\subsection{$m,r$ sufficiently large}
As we shall see in the next theorem, the presence of a low order term increase the regularity of solutions. This fact was already noticed in some cases when $\te=0,\g>1$, see \cite{boc24}. Here we extend this analysis to the case $\te>1$.
\vspace{0.1in}
\begin{theorem} \label{bounds4}
Suppose $\te>1$ and $\te+2\g<2$, $f\in\L m$ with $m>\frac{\te N}{2}$, $E\in [\L {p}]^{N}$ such that $p>\frac{Nm}{m-2(\g+\te-1)}$. Then the Dirichlet problem \eqref{main2} has bounded weak solution $u\in\wpp\cap \L \infty$.
\end{theorem}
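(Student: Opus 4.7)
The strategy is to derive uniform bounds for $\un$ in $\wpp\cap\Linf$ and then pass to the limit $n\to\infty$ as in Theorems \ref{bounds} and \ref{bounds2}. The principal new difficulty compared with Theorem \ref{bounds} is that for $\te>1$ the primitive $H_0(s)=\int_0^s(1+|t|)^{-\te}\,dt$ used there is \emph{bounded}, so its use in the Stampacchia argument yields no information on $\un$. The lower-order term $+\un$ in \eqref{main2}, absent from Theorem \ref{bounds}, provides the necessary compensation, and the substitution $v_n:=\int_0^{\un}(1+|t|)^{-\te/2}\,dt$ (unbounded since $\te<2$) is its natural replacement.

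I would proceed in two stages. First, I would derive a uniform $L^m$ bound for $\un$ by testing \eqref{eq4} with $\vf=[(1+|\un|)^{m-1}-1]\mathrm{sgn}(\un)$, in the spirit of Lemma \ref{lemB}: the lower-order term contributes $\int|\un|^m$ on the LHS, and after Young and Hölder the hypothesis $p>Nm/(m-2(\g+\te-1))$ supplies exactly the summability of $E$ required to close the estimate. Second, I would test \eqref{eq4} with $\vf=G_k(\un)$: the degenerate coercivity combined with Young (splitting $\un^\g E\cdot D\un$ via a $(1+|\un|)^{\te/2}$ factor matching the coercivity) and the identity $\int\un G_k(\un)=kg(k)+\|G_k(\un)\|_2^2$ give
\[
\tfrac{\al}{2}\iom\frac{|DG_k(\un)|^2}{(1+|\un|)^\te}+kg(k)+\|G_k(\un)\|_2^2 \le C\int_{A_k}|\un|^{2\g+\te}|E|^2+\int_{A_k}|f||G_k(\un)|.
\]
Because $|Dv_n|^2=|D\un|^2/(1+|\un|)^\te$, the first LHS integral equals $\int|DG_{\kappa}(v_n)|^2$ with $\kappa$ the image of $k$ under the substitution, so Sobolev's embedding applied to $G_{\kappa}(v_n)$, together with Hölder estimates on the RHS using the auxiliary $L^m$ bound, should yield $g_v(\kappa)\le\beta|A_k|^{\al}$ with $\al>1$. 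Lemma \ref{lemA} then delivers $\|v_n\|_{\Linf}\le C$, and since $v_n$ is unbounded as a function of $\un$, we conclude $\|\un\|_{\Linf}\le C$ uniformly.

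The main obstacle will be verifying that the Stampacchia exponent $\al$ genuinely exceeds $1$: this is a delicate interplay between the $L^m$ bound on $\un$, the summability exponent $p$, and the Sobolev exponent $2^*$, and it is precisely the hypotheses of the theorem---with $m>\te N/2$ playing the role of the classical Stampacchia threshold $N/2$ scaled by the degeneracy $\te$, and $p>Nm/(m-2(\g+\te-1))$ matching the remaining Hölder constraint from the $\int|\un|^{2\g+\te}|E|^2$ term---that are calibrated to make $\al>1$. Once the uniform $\wpp\cap\Linf$ bound is established, an additional test with $\vf=\un$ gives the $\wpp$ estimate, and passage to the limit proceeds exactly as in Theorems \ref{bounds} and \ref{bounds2}: weak compactness in $\wpp$, a.e.\ convergence up to subsequence, and dominated convergence on each term, with the nonlinear $\un^\g E$-term now trivially controlled by the $\Linf$ bound.
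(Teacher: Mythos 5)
Your overall architecture (approximation, an $L^m$ a priori bound in the spirit of Lemma \ref{lemB}, a Stampacchia iteration via Lemma \ref{lemA} on an unbounded auxiliary function, then passage to the limit) is the same as the paper's, and your Stage 1 is essentially what the paper does. The gap is in Stage 2, and it is exactly the point you defer: the Stampacchia exponent does \emph{not} exceed $1$ under the stated hypotheses with your choice of test function. Testing with $\vf=G_k(\un)$, the lower-order term contributes $\|G_k(\un)\|_2^2+k\,g(k)$, which is quadratic in $G_k(\un)$, so the only way to absorb the source term $\int_{A_k}|f||G_k(\un)|$ is Young/H\"older with exponent $2$, leaving $\int_{A_k}|f|^2\le\norma{f}{m}^2|A_k|^{1-2/m}$. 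After Sobolev applied to $G_\kappa(v_n)$ and the reduction $g_v(\kappa)\le\|G_\kappa(v_n)\|_{2^*}|A_k|^{1-1/2^*}$, the total exponent coming from the source is $\frac{m-2}{2m}+\frac{N+2}{2N}$, which exceeds $1$ if and only if $m>N$. Since $1<\te<2$, the hypothesis $m>\frac{\te N}{2}$ permits $m$ well below $N$, so your scheme cannot close at the stated threshold. (You also cannot route $\int_{A_k}|f||G_k(\un)|$ through the Sobolev term, because Sobolev controls $G_\kappa(v_n)\sim(1+|\un|)^{1-\te/2}$, which grows strictly slower than $G_k(\un)\sim|\un|$.)

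The missing idea is the paper's choice of test function
\[
\vf=\tfrac{1}{\te-1}\,G_{(1+k)^{\te-1}}\bigl((1+|\un|)^{\te-1}\bigr)\,\mathrm{sgn}(\un)=:G_{k,n}\,\mathrm{sgn}(\un),
\]
which grows like $(1+|\un|)^{\te-1}$. This has two effects. First, the coercive term becomes $\ioa|D\un|^2/(1+|\un|)^2=\ioa|D\log(1+|\un|)|^2$, so the auxiliary unbounded function is $\log(1+|\un|)$ rather than your $v_n$. Second, and crucially, since $G_{k,n}\le C(1+|\un|)^{\te-1}$ one has $G_{k,n}^{\te/(\te-1)}\le C\,G_{k,n}|\un|$, so Young with the conjugate pair $(\te,\te')$ gives $\ioa|f|G_{k,n}\le C_\e\ioa|f|^\te+\e C_\te\ioa|\un|G_{k,n}$, and the second piece is absorbed by the lower-order term $\ioa|\un|G_{k,n}$. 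The surviving source contribution is $\ioa|f|^\te\le\norma{f}{m}^\te|A_k|^{(m-\te)/m}$, whose Stampacchia exponent $\frac{2^*(m-\te)}{2m}$ exceeds $1$ precisely when $m>\frac{\te N}{2}$ --- this is where the threshold comes from. Your test function $G_k(\un)$ grows like $|\un|$, too fast for this absorption (one would need $G_k(\un)^{1/(\te-1)}\lesssim 1+|\un|$, false for $\te<2$), which is why you are forced back to exponent $2$ and the stronger requirement $m>N$. So either adopt the paper's test function, or substantially strengthen your hypothesis on $m$; as written, the proof does not establish the theorem.
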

\begin{proof}
Set $A_{k}=\{(1+|\un|)^{\te-1} > (1+k)^{\te-1}\}=\{ |\un| > k\}$ and take 
\[\vf=\frac{1}{\te-1}G_{(1+k)^{\te-1}}((1+|\un|)^{\te-1})\textrm{sgn}(\un)=:G_{k,n}\textrm{sgn}(\un)
\]
as a test function in \eqref{eq4}. We have:
\[
\al\ioa \frac{|Du_{n}|^{2}}{(1+|\un|)^2} +\ioa |\un| G_{k,n}\le C\ioa(1+|\un|)^{\g+\te-2}|E| |D\un| + \ioa|f||G_{k,n}|
\]
Notice that by Young's inequality:
\[
\begin{split}
\ioa|f|G_{k,n}&\le C_{\e}\ioa |f|^{\te} +\frac{\e}{\te-1}\ioa [(1+|\un|)^{\te-1} - (1+k)^{\te-1}][(1+|\un|)^{\te-1} - (1+k)^{\te-1}]^{\frac 1 {\te-1}}\\ &\le C_{\e}\ioa |f|^{\te} +\e C_{\te}\ioa G_{k,n}|\un|
\end{split}
\]
Taking $\e=\frac 1 {C_{\te}}$ and combining with the equations above we get:
\[
\al\ioa \frac{|Du_{n}|^{2}}{(1+|\un|)^2} \le C\ioa(1+|\un|)^{\g+\te-2}|E| |D\un| + C_{\e}\ioa |f|^{\te}.
\]
Using Young's inequality again:
\[
\frac\al2\ioa \frac{|Du_{n}|^{2}}{(1+|\un|)^2} \le \frac{1}{2\al}\ioa(1+|\un|)^{2(\g+\te-1)}|E|^{2}+ C_{\e}\ioa |f|^{\te},
\]
Choosing $r$ such that $2r'(\g+\te-1)=m$, i.e. $r=\frac{m}{m-2(\g+\te-1)}$, using lemma \ref{lemB} and Holder's inequality we obtain:
\[
\begin{split}
\ioa \left|D\log\left(\frac{1+|\un|}{1+k}\right)\right|^2 &\le C\left(\ioa |E|^{2r}\right)^{\frac 1 r}+ C\norma{f}{m}^{\te}|A_{k}|^{\frac{m-\te}{m}}\\ &\le C\left(\norma{E}{p}^{2}|A_{k}|^{\frac{p-2r}{p}}+\norma{f}{m}^{\te}|A_{k}|^{\frac{m-\te}{m}}\right)
\end{split}
\]
Finally, Sobolev's inequality give us:
\[
\left(\ioa |\log(1+|\un|)-\log(1+k)|^{2^{*}}\right)^{\frac 2 {2^{*}}}\le C\left(\norma{E}{p}^{2}|A_{k}|^{\frac{p-2r}{p}}+\norma{f}{m}^{\te}|A_{k}|^{\frac{m-\te}{m}}\right)\]
We conclude that:
\[
\ioa |\log(1+|\un|)-\log(1+k)|^{2^{*}}\le C\left(\norma{E}{p}^{2}|A_{k}|^{\frac{2^{*}(p-2r)}{2p}}+\norma{f}{m}^{\te}|A_{k}|^{\frac{2^{*}(m-\te)}{2m}}\right)
\]
Since $m>\frac{\te N}{2},p>rN$ implies ${\frac{2^{*}(m-\te)}{2m}},\frac{2^{*}(p-2r)}{2p} >1$, lemma \ref{lemA} gives $\norma{\log(1+|\un|)}{\infty}\le C$ and consequently:
\[
\norma{\un}{\infty}\le C
\]
It suffices now to choose any $n>C$ such that $T_{n}(\un) = \un$, for this particular $n$, $\un$ is a bounded weak solution of problem \eqref{main2}.
\end{proof}
\subsection{$m\ge \te+2$}
In the next case we slightly weaken the summability of the source $f(x)$, the cost of this is the existence of a distributional solution only, instead of a bounded weak solution.
\vspace{0.1in}
\begin{theorem} 
Suppose $\te+2\g<2$, $f\in\L m$ with $m\ge\te+2$, and $E\in [\L {2r}]^{N}$ with $r\ge \max\left(\frac{m}{m-2(\g+\te)},\frac{2^{*}}{2^{*}-\g}\right)$. Then the Dirichlet problem \eqref{main2} has distributional solution $u\in\wpp\cap \L m$.
\end{theorem}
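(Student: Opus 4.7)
The plan is to obtain a priori bounds for the approximating sequence $(u_n)$ both in $\L m$ and in $\wpp$, and then pass to the limit in the truncated formulation \eqref{eq4} along the lines of Theorems \ref{bounds} and \ref{bounds2}. First I would apply Lemma \ref{lemB} directly: the assumptions $\te+2\g<2$, $f\in\L m$ with $m\ge \te+2\ge 2$, and the first entry of the $\max$ in the bound on $r$ are precisely those needed, and produce a constant $C$, independent of $n$, such that $\norma{u_n}{m}\le C$.

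For the $\wpp$ estimate, I would test \eqref{eq4} with the standard choice $\vf=[(1+|u_n|)^{\te+1}-1]\,\textrm{sgn}(u_n)$ already used in Theorem \ref{bounds}. The coercivity condition \eqref{degc} cancels the weight $(1+|u_n|)^{\te}$ arising in $D\vf$ and leaves $\al(\te+1)\iom|Du_n|^2$ on the left, augmented by the nonnegative lower order contribution $\iom u_n\vf$. On the right, Young's inequality splits $(\te+1)(1+|u_n|)^{\te+\g}|E||Du_n|$ into half of the gradient term plus a residual $C(1+|u_n|)^{2(\te+\g)}|E|^2$; Holder with exponents $(r,r')$ reduces the latter to controlling $\iom|u_n|^{2(\te+\g)r'}$, and since $r\ge m/(m-2(\te+\g))$ one has $2(\te+\g)r'\le m$, so this piece closes thanks to the previous $\L m$ bound. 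Similarly, Holder with $(m,m')$ applied to $\iom|f|(1+|u_n|)^{\te+1}$ leads to $\iom|u_n|^{m'(\te+1)}$; the hypothesis $m\ge \te+2$ is exactly $m'(\te+1)\le m$, so the $\L m$ bound closes the remaining term and $\norma{Du_n}{2}\le C$ follows.

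With $(u_n)$ bounded in $\wpp\cap\L m$, a subsequence satisfies $u_n\wc u$ in $\wpp$, $u_n\to u$ a.e.\ in $\Om$, and $u_n\to u$ strongly in $\L s$ for every $s<2^{*}$ by Rellich--Kondrachov. Fixing $\vf\in\mathcal{C}^{\infty}_{0}(\Om)$, I would pass to the limit term by term as in Theorem \ref{bounds2}: the principal part exploits $a(x,T_n(u_n))\to a(x,u)$ a.e.\ together with the uniform bound $a\le \beta$, so dominated convergence gives strong $L^p$ convergence of $a(x,T_n(u_n))D\vf$ and, combined with the weak convergence of $Du_n$, yields $\iom a(x,T_n(u_n))Du_n\cdot D\vf\to\iom a(x,u)Du\cdot D\vf$; the lower order term and the forcing $f_n\vf$ pass via strong $L^s$ convergence and dominated convergence, respectively; finally the convection term $\iom T_n(u_n)^{\g}E_n\cdot D\vf$ is handled by Vitali's theorem, since Holder with the pair $(2^{*}/\g,2^{*}/(2^{*}-\g))$ together with the second entry $r\ge 2^{*}/(2^{*}-\g)$ in the hypothesis shows that $|u_n|^{\g}|E_n|$ is equi-integrable.

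The main obstacle I anticipate lies in the interplay between the two a priori bounds: the $\wpp$ estimate relies on having the $\L m$ bound available first, so the order of applying Lemma \ref{lemB} and then the test function argument is essential. A subtler point is the passage to the limit in the principal part, where the smoothness and compact support of the test function allow one to sidestep a Minty-type argument and reduce everything to the strong $L^2$ convergence of $a(x,T_n(u_n))D\vf$, which follows from dominated convergence thanks to the uniform bound $a\le \beta$.
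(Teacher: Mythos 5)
Your proposal is correct and follows essentially the same route as the paper: apply Lemma \ref{lemB} to get the $\L m$ bound, test \eqref{eq4} with $[(1+|u_n|)^{\te+1}-1]\,\mathrm{sgn}(u_n)$, close the Young/H\"older residuals using $2(\te+\g)r'\le m$ and $m'(\te+1)\le m$ (the latter being exactly $m\ge\te+2$), and pass to the limit as in Theorem \ref{bounds2}. You simply spell out the exponent bookkeeping and the limit passage in more detail than the paper does.
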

\begin{proof}
Take $\vf=[(1+|\un|)^{\te+1}-1]\mathrm{sgn}(\un)$ as a test function to obtain:
\[
\al \iom |D\un|^{2}\le C \iom (1+|\un|)^{\te+\g}|E||D\un| + C\int_{\Omega} |f||\un|^{\te+1}.
\]
Using Young's inequality and simplifying we have:
\[
\iom |D\un|^{2} \le C \iom (1+|\un|)^{2(\te+\g)}|E|^{2} + C\norma{f}{m}\norma{|\un|^{\te+1}}{m'}
\]
Since $\te+2\le m$ and $r\ge\frac{m}{m-2(\g+\te)}$ we can use lemma \ref{lemB}, which gives:
\[
\iom |D\un|^{2} \le C
\]
Hence, $\un\wc u$ up to a subsequence, using the same reasoning as the proof of theorem \ref{bounds2} we can easily see that $u$ is a distributional solution.
\end{proof}
\subsection{$2 \le m< \te+2$}
In our last result we weaken even more the summability of the source term, yet we are still able to obtain distributional solutions.
\vspace{0.1in}
\begin{theorem} 
Suppose $\te+2\g<2$, $f\in\L m$ with $2\le m<\te+2$, and $E\in [\L {2r}]^{N}$ with $r\ge \max\left(\frac{m}{2-2\g-\te},\frac{m}{6+\te-2m-2\g}\right)$. Then the Dirichlet problem \eqref{main2} has distributional solution $u\in\textbf{W}_{0}^{1,\frac{2m}{\te+2}}\cap\L m$.
\end{theorem}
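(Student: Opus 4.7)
The plan is to mirror the strategy of Theorem~\ref{bounds2}: derive a weighted $L^{2}$ estimate on $D\un$, convert it by H\"older to a $\textbf{W}_{0}^{1,q}$ bound with $q=\frac{2m}{\te+2}$, extract a weak limit, and pass to the limit via dominated and Vitali convergence. Since $m<\te+2$ the test function $[(1+|\un|)^{\te+1}-1]\mathrm{sgn}(\un)$ used in the previous theorem is unavailable (the $f$-term is no longer controllable by $\un\in\L m$); instead, I would plug
\[
\vf = [(1+|\un|)^{m-1}-1]\,\mathrm{sgn}(\un)
\]
into \eqref{eq4}. Dropping the nonnegative contribution $\iom\un\vf\ge 0$ and invoking \eqref{degc} yields
\[
\al(m-1)\iom(1+|\un|)^{m-2-\te}|D\un|^{2} \le C\iom|\un|^{\g}(1+|\un|)^{m-2}|E|\,|D\un|+\iom|f|(1+|\un|)^{m-1}.
\]

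Young's inequality on the drift term absorbs half of the gradient into the left side and leaves the residual $C\iom(1+|\un|)^{m-2+\te+2\g}|E|^{2}$. H\"older with exponents $r,r'$ on this residual imposes $r'(m-2+\te+2\g)\le m$, i.e.\ $r\ge\frac{m}{2-2\g-\te}$; H\"older with $m,m'$ on the source reduces it to $\|f\|_{m}\,\|(1+|\un|)^{m-1}\|_{m'}$. Both are uniformly bounded because Lemma~\ref{lemB} delivers $\|\un\|_{m}\le C$ under exactly the same first bound on $r$. Hence
\[
\iom\frac{|D\un|^{2}}{(1+|\un|)^{\te+2-m}}\le C,
\]
and a second H\"older application with exponents $\tfrac{2}{q}$ and $(\tfrac{2}{q})'$ gives
\[
\iom|D\un|^{q} \le \left(\iom\frac{|D\un|^{2}}{(1+|\un|)^{\te+2-m}}\right)^{q/2}\left(\iom(1+|\un|)^{q(\te+2-m)/(2-q)}\right)^{(2-q)/2}.
\]
With $q=\frac{2m}{\te+2}$ the inner exponent collapses to $m$, so both factors are controlled. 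Thus $\un$ is uniformly bounded in $\textbf{W}_{0}^{1,q}\cap\L m$; up to a subsequence, $\un\wc u$ in $\textbf{W}_{0}^{1,q}$ with $\un\to u$ a.e.\ in $\Om$.

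For the limit passage in \eqref{eq4} against $\vf\in\mathcal{C}^{\infty}_{0}(\Om)$, the principal part converges because $a(x,T_{n}\un)D\vf\to a(x,u)D\vf$ strongly in $L^{q'}$ by dominated convergence (using $a\le\beta$ from \eqref{degc} and the compact support of $\vf$), combined with $D\un\wc Du$ in $L^{q}$; the lower-order term $\iom\un\vf$ and the source $\iom f_{n}\vf$ converge in $L^{1}$ directly. The delicate term is the drift $\iom T_{n}(\un^{\g})E_{n}\cdot D\vf$, which I would treat via Vitali's convergence theorem. Equi-integrability of $|\un|^{\g}|E|$ on subsets of $\mathrm{supp}\,\vf$ reduces, by H\"older with the $\L m$-bound on $\un$, to uniform integrability of $|E|^{m/(m-\g)}$, which is ensured by $E\in[\L{2r}]^{N}$; this is exactly where the second summability bound $r\ge\frac{m}{6+\te-2m-2\g}$ is invoked, providing the requisite margin. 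The main technical obstacle is the careful bookkeeping of Young/H\"older exponents to simultaneously handle the gradient, source and drift integrals; once these are reconciled, the identification of $u\in\textbf{W}_{0}^{1,\frac{2m}{\te+2}}\cap\L m$ as a distributional solution of \eqref{main2} is routine.
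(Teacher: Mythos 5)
Your proposal is correct and follows essentially the same route as the paper: the test function $[(1+|\un|)^{m-1}-1]\,\mathrm{sgn}(\un)$, Young plus H\"older with exponents $r,r'$ combined with Lemma~\ref{lemB} to get the weighted gradient estimate, and the H\"older step with $q=\frac{2m}{\te+2}$ so that $\frac{q(\te+2-m)}{2-q}=m$, followed by the standard limit passage. Your extra detail on the Vitali argument is fine; the only (harmless) quibble is the attribution of the equi-integrability of $|\un|^{\g}|E|$ to the second lower bound on $r$ --- since $2r\ge\frac{2m}{2-2\g-\te}\ge\frac{m}{m-\g}$ already follows from the first bound (which in fact dominates the second throughout $2\le m<\te+2$), that H\"older step is already covered.
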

\begin{proof}
Consider $\vf= [(1+|\un|)^{m-1}-1]\textrm{sgn}(\un)$ as a test function. We have
\[
\iom \frac{|D \un|^2}{(1+|\un|)^{\te-m+2}}\le C\left(\iom (1+|\un|)^{\g+m-2}|E||D \un|+\iom |f||\un|^{m-1}\right)
\]
Using Young's inequality, the fact that $m<\te+2$, and  lemma \ref{lemB} again, we have:
\[
\begin{split}
\iom \frac{|D \un|^2}{(1+|\un|)^{\te-m+2}}&\le C\left(\iom (1+|\un|)^{2[\g+m-2+\frac{\te-m+2}{2}]}|E|^{2}+\iom |f||\un|^{m-1}\right)\\ &\le C\left(\norma{E}{2r}^{2}+\norma{f}{m}^{m}\right)
\end{split}
\]
We conclude that:
\[
\iom \frac{|D \un|^2}{(1+|\un|)^{\te-m+2}}\le C
\]
For any $q<2$, by Holder's inequality using $\frac 2 q$ and $(\frac{2}{q})'$:
\[
\int_{\Omega} |Du_{n}|^{q}=\int_{\Omega} \frac{(1+|\un|)^{\frac{q(\te-m+2)}{2}}}{(1+|\un|)^{\frac{q(\te-m+2)}{2}}}|Du_{n}|^{q}\le C \left( \iom (1+|\un|)^{\frac{q(\te-m+2)}{2-q}}\right)^{\frac{2-q}{2}}
\]
Set $q=\frac{2m}{\te+2}$ then $\frac{q(\te-m+2)}{2-q}=m$ and we conclude that 
\[
\int_{\Omega} |Du_{n}|^{\frac{2m}{\te+2}}\le C.
\]
Therefore, $\un\wc u$ up to a subsequence and as before, $u$ is a distributional solution.
\end{proof}
\section{Concluding remarks and open questions}
Notice that we have assumed $N\ge3$ in this manuscript due to some estimates failing when $N=2$. It would be interesting to see if the arguments presented here can be adapted to include similar results in the plane as well. Hence it's reasonable to ask the following question:

\vspace{0.1in}
\textit{What are the equivalent results of the ones presented here in 2 dimensions?}
\vspace{0.1in}

In this work the assumption $\g>0$ was heavily used, so it would be interesting to see the equivalent results, if any, in the case $\g<0$. Notice in this case the nonlinearity would compete with the degeneracy but this time  also being a singularity so it's possible that no bounded solutions exists and if they do it's possible that some smallness condition will be required contrary to the case described here in theorem \ref{bounds}. We ask the following:

\vspace{0.1in}
\textit{Is it still possible to obtain finite energy solutions if $\g<0$ without smallness condition on the source or vector field $E$? }
\vspace{0.1in}

We can increase the level of difficulty of the Dirichlet problem \eqref{main2} if instead of adding $u(x)$, we add $g(u)$ for some real valued function $g(s)$ with reasonable growth. It would be interesting to see if one can obtain Ambrosetti–Prodi type results in this case. More precisely, consider the problem:
\begin{equation} 
\begin{cases}
 -\Div(a(x,u)Du) + g(u) = -\Div\left(u^{\g}E(x)\right)+f(x) \qquad & \mbox{in } \Omega,\\
u (x) = 0 & \mbox{on }  \partial \Omega,
\end{cases}
\end{equation} 
\vspace{0.1in}
\textit{Is it possible to find a function $g(s)$ Lipschitz with $g(0)=0$ such that for any given source $f(x)$ only one of the following three options are possible: the above system has no solution, one solution, or two solutions.}
\vspace{0.1in}
\bibliography{sn-article}
\end{document}